\documentclass[11pt]{article}
\usepackage{amsmath}
\usepackage{amssymb}
\usepackage{amsthm}
\usepackage{bookmark}
\usepackage{geometry}
\usepackage{hyperref}
\newtheorem{theorem}{Theorem}
\newtheorem{corollary}[theorem]{Corollary}
\newtheorem{proposition}[theorem]{Proposition}
\theoremstyle{definition}
\newtheorem{example}[theorem]{Example}
\DeclareMathOperator{\im}{im}
\renewcommand{\labelenumi}{\upshape(\roman{enumi})}
\renewcommand{\theenumi}\labelenumi

\title{A general approach to permutation polynomials from quadratic forms}
\author{Ruikai Chen\\\small School of Mathematical Sciences, South China Normal University, Guangzhou 510631, China\\\small Email: \href{mailto:chen.rk@outlook.com}{chen.rk@outlook.com}}
\date{}

\begin{document}

\maketitle

\begin{abstract}
We investigate a family of permutation polynomials of finite fields of characteristic $2$. Through a connection between permutation polynomials and quadratic forms, a general treatment is presented to characterize these permutation polynomials. By determining some character sums associated with quadratic forms, we explicitly describe several classes of permutation polynomials.

\textit{Keywords:} character sum, finite field, permutation, polynomial, quadratic form
\end{abstract}

\section{Introduction}

Let $\mathbb F_q$ be a finite field of characteristic $2$, $\mathbb F_{q^n}$ being its extension of degree $n$. A polynomial over $\mathbb F_{q^n}$ naturally defines a map on $\mathbb F_{q^n}$. It is called a permutation polynomial of $\mathbb F_{q^n}$ if the corresponding map is a permutation of $\mathbb F_{q^n}$. Consider polynomials of the form
\[\sum_{i,j}a_{ij}x^{2^i+2^j}\]
with coefficients in $\mathbb F_{q^n}$. Such permutation polynomials have attracted significant attention over the decades. See \cite[Table 1]{yu2024constructing} for a list of recent results. Most permutation polynomials studied in the literature have certain forms with several coefficients. In this paper, we seek to establish a framework for these permutation polynomials. This begins with a simple observation.

\begin{theorem}\label{permutation}
A polynomial $f$ over $\mathbb F_{q^n}$ is a permutation polynomial of $\mathbb F_{q^n}$ if and only if $\sum_{v\in\mathbb F_{q^n}}\chi(uf(v))=0$ for every $u\in\mathbb F_{q^n}^*$, where $\chi$ is a nontrivial additive character of $\mathbb F_{q^n}$.
\end{theorem}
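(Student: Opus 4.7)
The plan is to deduce the equivalence from the orthogonality of additive characters. The key identity is that for any $y\in\mathbb F_{q^n}$,
\[\sum_{u\in\mathbb F_{q^n}}\chi(uy)=\begin{cases}q^n & \text{if } y=0,\\ 0 & \text{otherwise,}\end{cases}\]
which holds because $v\mapsto \chi(uv)$ ranges over all additive characters of $\mathbb F_{q^n}$ as $u$ runs over $\mathbb F_{q^n}$, and a nontrivial character sums to zero over the group.

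For the ``only if'' direction I would argue directly: if $f$ permutes $\mathbb F_{q^n}$, then as $v$ runs over $\mathbb F_{q^n}$ so does $f(v)$, and hence for every $u\in\mathbb F_{q^n}^{*}$ the character $v\mapsto\chi(uv)$ is nontrivial, giving $\sum_{v}\chi(uf(v))=\sum_{w}\chi(uw)=0$.

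For the ``if'' direction I would count fibres using the orthogonality identity. For each $w\in\mathbb F_{q^n}$,
\[q^n\cdot|f^{-1}(w)|=\sum_{v\in\mathbb F_{q^n}}\sum_{u\in\mathbb F_{q^n}}\chi\bigl(u(f(v)-w)\bigr)=q^n+\sum_{u\in\mathbb F_{q^n}^{*}}\chi(-uw)\sum_{v\in\mathbb F_{q^n}}\chi(uf(v)),\]
after splitting off the $u=0$ term. Under the hypothesis the inner sums all vanish, so $|f^{-1}(w)|=1$ for every $w$, which is exactly the statement that $f$ is a permutation.

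There is no real obstacle; the whole argument is a single application of character orthogonality, and the mildly nontrivial point worth highlighting is that a \emph{single} nontrivial $\chi$ together with multiplication by $u\in\mathbb F_{q^n}^{*}$ already exhausts every nontrivial additive character of $\mathbb F_{q^n}$, which is what lets us phrase the criterion in terms of one fixed $\chi$.
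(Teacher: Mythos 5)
Your proposal is correct and follows essentially the same route as the paper: both directions rest on the orthogonality relation $\sum_{u}\chi(uy)=q^n[y=0]$, and the ``if'' direction counts fibres of $f$ exactly as the paper counts roots of $f(x)+c$. The only cosmetic difference is that you spell out the ``only if'' direction, which the paper dismisses as obvious.
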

\begin{proof}
Suppose $\sum_{v\in\mathbb F_{q^n}}\chi(uf(v))=0$ for every $u\in\mathbb F_{q^n}^*$. By the orthogonality relations for characters, for $c\in\mathbb F_{q^n}$, the number of roots of $f(x)+c$ in $\mathbb F_{q^n}$ is
\[q^{-n}\sum_{v\in\mathbb F_{q^n}}\sum_{u\in\mathbb F_{q^n}}\chi(u(f(v)+c))=q^{-n}\sum_{u\in\mathbb F_{q^n}}\chi(uc)\sum_{v\in\mathbb F_{q^n}}\chi(uf(v))=q^{-n}\sum_{v\in\mathbb F_{q^n}}\chi(0)=1.\]
Hence, $f$ is indeed a permutation polynomial of $\mathbb F_{q^n}$. The converse is obvious.
\end{proof}

We will see in the next section that for polynomials in the form mentioned above, this character sum is closely related to quadratic forms over finite fields, which allows us to characterize those permutation polynomials.

The following notation will be used throughout. Consider $\mathbb F_{q^n}$ as an $n$-dimensional vector space over $\mathbb F_q$ and the polynomial ring $\mathbb F_{q^n}[x]$. A polynomial over some extension of $\mathbb F_q$ is called a $q$-linear polynomial if the exponent of each of its terms is a power of $q$. Such polynomial is viewed as a linear endomorphism of $\mathbb F_{q^n}/\mathbb F_q$. By abuse of notation, let $x^{q^n}=x$; i.e., polynomials over $\mathbb F_{q^n}$ are identified with maps on $\mathbb F_{q^n}$. For a $2$-linear polynomial $L$ over $\mathbb F_{q^n}$ given by
\[L(x)=\sum_ia_ix^{2^i},\]
define $L^\prime$ as
\[L^\prime(x)=\sum_i(a_ix)^{2^{-i}},\]
where $x^{2^{-1}}$ denotes the inverse of the automorphism $x^2$ of $\mathbb F_{q^n}$. Denote by $\mathrm{Tr}$ and $\mathrm N$ the trace map and the norm map of $\mathbb F_{q^n}/\mathbb F_q$ respectively. Let $\chi$ be the canonical additive character of $\mathbb F_{q^n}$, and $\psi$ the canonical additive character of $\mathbb F_q$.

In general, if $L$ is $q$-linear, then $\mathrm{Tr}(uL(v))=\mathrm{Tr}(L^\prime(u)v)$ for all $u,v\in\mathbb F_{q^n}$ as easily seen. In other words, $\ker L$ is the orthogonal complement of $\im L^\prime$ with respect to the standard trace form of $\mathbb F_{q^n}/\mathbb F_q$. The additive characters depend only on the trace over $\mathbb F_2$, so $\chi(uL(v))=\chi(L^\prime(u)v)$ for all $u,v\in\mathbb F_{q^n}$ whenever $L$ is $2$-linear.

\section{Quadratic forms over finite fields of characteristic $2$}

Let $L$ be a $q$-linear polynomial over $\mathbb F_{q^n}$, and define
\[\mathcal S(L)=\sum_{v\in\mathbb F_{q^n}}\chi(vL(v)),\]
where $\chi(vL(v))=\psi(\mathrm{Tr}(vL(v)))$. Note that $\mathrm{Tr}(xL(x))$ induces a quadratic form of $\mathbb F_{q^n}/\mathbb F_q$. Specifically, given a basis $\beta_1,\dots,\beta_n$ of $\mathbb F_{q^n}/\mathbb F_q$ and $v=v_1\beta_1+\dots+v_n\beta_n$ with $v_1,\dots,v_n\in\mathbb F_q$, one has
\[\mathrm{Tr}(vL(v))=\sum_{i=1}^n\sum_{j=1}^nv_iv_j\mathrm{Tr}(\beta_iL(\beta_j)).\]
According to \cite[Theorem 6.30]{lidl1997}, by a change of basis $\mathrm{Tr}(vL(v))$ can be written as
\[v_1v_2+av_1^2+bv_2^2+v_3v_4+v_5v_6+\dots+v_{2r-1}v_{2r}\]
for some integer $r$ with $0\le 2r\le n$ and some elements $a,b\in\mathbb F_q$, or
\[v_1^2+v_2v_3+v_4v_5+\dots+v_{2r}v_{2r+1}\]
for some integer $r$ with $0<2r+1\le n$. Clearly, $\mathcal S(L)=0$ in the latter case. In general, the following propositions help to calculate the sum.

\begin{proposition}\label{sum}
For $a,b\in\mathbb F_{q^n}$, we have
\[\sum_{v_1\in\mathbb F_q}\sum_{v_2\in\mathbb F_q}\psi(v_1v_2+av_1+bv_2)=\psi(ab)q.\]
\end{proposition}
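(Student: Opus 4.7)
The plan is a completing-the-square calculation specific to characteristic $2$. First, I note the algebraic identity
\[v_1v_2+av_1+bv_2=(v_1+b)(v_2+a)+ab,\]
which holds in any commutative ring of characteristic $2$. Making the affine change of summation variables $v_1\mapsto v_1+b$ and $v_2\mapsto v_2+a$, the constant term $ab$ pulls out of the character and the double sum becomes
\[\psi(ab)\sum_{v_1\in\mathbb F_q}\sum_{v_2\in\mathbb F_q}\psi(v_1v_2).\]
(This translation step uses that additive shifts permute the index set $\mathbb F_q$; the hypothesis on $a,b$ is exactly what is needed to make the inner expression a legitimate argument of $\psi$.)

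Next, I evaluate the residual sum $\sum_{v_1,v_2\in\mathbb F_q}\psi(v_1v_2)$ by fixing $v_1$ and summing over $v_2$. When $v_1=0$, the inner sum reduces to $\sum_{v_2\in\mathbb F_q}\psi(0)=q$. When $v_1\ne 0$, the map $v_2\mapsto v_1v_2$ is a bijection on $\mathbb F_q$, so the inner sum equals $\sum_{w\in\mathbb F_q}\psi(w)$, which vanishes by the orthogonality relation for the nontrivial additive character $\psi$. Thus only the $v_1=0$ term survives and the residual sum equals $q$; multiplying by the prefactor $\psi(ab)$ delivers the claimed value.

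There is no serious obstacle here. The entire proof amounts to a one-line algebraic identity followed by a routine invocation of character orthogonality; the only point worth flagging is that in characteristic $2$ the sign on the correction term from completing the square is immaterial, since $+ab=-ab$, so the formula has precisely the symmetric shape stated.
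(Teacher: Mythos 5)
Your argument is correct and is essentially the paper's: both reduce to the orthogonality relation $\sum_{w\in\mathbb F_q}\psi(cw)=0$ for $c\ne0$ applied to the inner variable. The paper just skips the completing-the-square step, factoring the sum directly as $\sum_{v_1}\psi(av_1)\sum_{v_2}\psi((v_1+b)v_2)$ and observing that the inner sum picks out the single term $v_1=b$; your translation $v_1\mapsto v_1+b$, $v_2\mapsto v_2+a$ accomplishes the same thing. The one point to fix is your parenthetical: the hypothesis $a,b\in\mathbb F_{q^n}$ is \emph{not} what makes those shifts permute the index set $\mathbb F_q$, nor what makes the arguments of $\psi$ (a character of $\mathbb F_q$) well defined --- for both you genuinely need $a,b\in\mathbb F_q$. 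The paper's own statement and proof share this looseness (the step ``$q$ if $v_1=b$'' likewise presumes $b\in\mathbb F_q$), and the proposition is only ever invoked with $a,b\in\mathbb F_q$ in the proof of Theorem \ref{tr}, so this is a defect inherited from the statement rather than a flaw in your method; but you should not assert that the hypothesis as written suffices.
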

\begin{proof}
Observe that
\[\sum_{v_1\in\mathbb F_q}\sum_{v_2\in\mathbb F_q}\psi(v_1v_2+av_1+bv_2)=\sum_{v_1\in\mathbb F_q}\psi(av_1)\sum_{v_2\in\mathbb F_q}\psi((v_1+b)v_2),\]
where the inner sum is $q$ if $v_1=b$, and is $0$ otherwise. Hence, the sum is exactly $\psi(ab)q$.
\end{proof}

\begin{proposition}\label{zero}
For a $q$-linear polynomial $L$ over $\mathbb F_{q^n}$, if $\mathrm{Tr}(xL(x))$ vanishes on $\ker(L^\prime+L)$, then $\mathcal S(L)^2=q^n|\ker(L^\prime+L)|$; otherwise, $\mathcal S(L)=0$.
\end{proposition}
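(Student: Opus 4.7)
The plan is to compute $\mathcal S(L)^2$ by squaring, changing variables, and isolating the contribution of the radical of the associated bilinear form.

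I would start by writing
\[\mathcal S(L)^2 = \sum_{u,v \in \mathbb F_{q^n}} \chi(uL(u) + vL(v))\]
and substituting $u = v + w$. In characteristic $2$ the $vL(v)$ contributions cancel, leaving $\chi(vL(w) + wL(v) + wL(w))$. The preliminary identity $\chi(vL(w)) = \chi(wL^\prime(v))$ collapses the two mixed terms to $\chi(w(L+L^\prime)(v))$, and applying the identity once more (together with $(L+L^\prime)^\prime = L+L^\prime$, which follows from $L^{\prime\prime}=L$) rewrites the inner sum over $v$ as $\sum_v \chi((L+L^\prime)(w)\cdot v)$. Orthogonality then forces this inner sum to equal $q^n$ when $(L+L^\prime)(w)=0$ and $0$ otherwise, leaving
\[\mathcal S(L)^2 = q^n \sum_{w\in\ker(L+L^\prime)} \chi(wL(w)).\]

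Next, I would observe that $\ker(L+L^\prime)$ is precisely the radical of the polarization $B(u,v) = \mathrm{Tr}((L+L^\prime)(u)v)$ of $Q(v) = \mathrm{Tr}(vL(v))$. Since $B$ vanishes there, $Q$ restricted to $\ker(L+L^\prime)$ is additive. If $Q$ also vanishes on $\ker(L+L^\prime)$, each character value in the remaining sum is $1$ and the inner sum equals $|\ker(L+L^\prime)|$, proving the first case immediately.

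If instead $Q$ does not vanish on $\ker(L+L^\prime)$, the aim is to show the inner sum is zero. The key observation is that $L$ being $q$-linear forces $Q(cw) = c^2 Q(w)$ for $c\in\mathbb F_q$; combined with additivity on the radical, this makes the image $Q(\ker(L+L^\prime)) \subseteq \mathbb F_q$ both an $\mathbb F_2$-subspace and closed under multiplication by every square in $\mathbb F_q$. Because squaring is a bijection on $\mathbb F_q$, the image is in fact an $\mathbb F_q$-subspace of $\mathbb F_q$, hence either $\{0\}$ or all of $\mathbb F_q$. In the nonzero case the sum $\sum_w \psi(Q(w))$ factors through a surjection onto $\mathbb F_q$ and vanishes by orthogonality of $\psi$.

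The main obstacle, in my view, is this last surjectivity-plus-vanishing argument; everything before it is the standard quadratic Gauss-sum reduction. A sanity check is that the dichotomy matches the two normal forms recalled from \cite{lidl1997}: in the first, $Q$ restricts to $0$ on the radical and we recover a nonzero square, while in the second the $v_1^2$ term survives on the radical and the sum must vanish, in agreement with the earlier remark that $\mathcal S(L)=0$ in that case.
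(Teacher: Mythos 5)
Your proposal is correct and follows essentially the same route as the paper: square the sum, substitute $u=v+w$, use $\chi(uL(v))=\chi(L'(u)v)$ to reduce to $q^n\sum_{w\in\ker(L'+L)}\chi(wL(w))$, then exploit the fact that $Q(x)=\mathrm{Tr}(xL(x))$ is additive on the radical and satisfies $Q(cw)=c^2Q(w)$ to conclude its image (equivalently, in the paper's phrasing, the quotient by its kernel) is an $\mathbb F_q$-subspace of $\mathbb F_q$, hence $\{0\}$ or all of $\mathbb F_q$, with orthogonality of $\psi$ killing the sum in the latter case.
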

\begin{proof}
Note that
\[\begin{split}\mathcal S(L)^2&=\sum_{u\in\mathbb F_{q^n}}\chi(uL(u))\sum_{v\in\mathbb F_{q^n}}\chi((u+v)L(u+v))\\&=\sum_{u\in\mathbb F_{q^n}}\sum_{v\in\mathbb F_{q^n}}\chi(vL(u)+uL(v))\chi(vL(v))\\&=\sum_{v\in\mathbb F_{q^n}}\chi(vL(v))\sum_{u\in\mathbb F_{q^n}}\chi(uL^\prime(v)+uL(v))\\&=q^n\sum_{v\in\ker(L^\prime+L)}\chi(vL(v)).\end{split}\]
For $u,v\in\mathbb F_{q^n}$,
\[\mathrm{Tr}((u+v)L(u+v))=\mathrm{Tr}(uL(u))+\mathrm{Tr}(vL(v))+\mathrm{Tr}(u(L^\prime+L)(v)),\]
so $\mathrm{Tr}(xL(x))$ induces an additive homomorphism from $\ker(L^\prime+L)$ to $\mathbb F_q$. Moreover, the kernel of this homomorphism is clearly a subspace over $\mathbb F_q$, and consequently its image has order either $q$ or $1$. In the former case, $\mathrm{Tr}(xL(x))$ acts as a homomorphism onto $\mathbb F_q$, and thus $\mathcal S(L)=0$. Otherwise, the homomorphism is trivial and $\mathcal S(L)^2=q^n|\ker(L^\prime+L)|$.
\end{proof}

Now we consider some specific polynomials, and, given their coefficients, determine whether $\mathcal S(L)$ is zero.

\begin{theorem}\label{degree2}
Let $n=2$ and $L(x)=ax^q+bx$ with $a,b\in\mathbb F_{q^2}$. Then $\mathcal S(L)=0$ if and only if $a^q+a=0$ and $b\ne0$.
\end{theorem}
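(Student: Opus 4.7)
My plan is to apply Proposition \ref{zero} directly, which requires determining $\ker(L'+L)$ and then deciding when $\mathrm{Tr}(xL(x))$ vanishes on that kernel.

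Since the Frobenius $x\mapsto x^q$ has order $2$ on $\mathbb{F}_{q^2}$ and therefore equals its own inverse, I would compute $L'(x)=(ax)^{q^{-1}}+bx=a^qx^q+bx$, so that in characteristic $2$,
\[
(L'+L)(x)=(a^q+a)x^q.
\]
Consequently $\ker(L'+L)=\{0\}$ when $a^q+a\neq 0$, while $\ker(L'+L)=\mathbb{F}_{q^2}$ when $a^q+a=0$. In the former case the vanishing hypothesis of Proposition \ref{zero} is satisfied vacuously, giving $\mathcal{S}(L)^2=q^2$, so $\mathcal{S}(L)\neq 0$.

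In the latter case $a\in\mathbb{F}_q$, and I would split $\mathrm{Tr}(xL(x))=\mathrm{Tr}(ax^{q+1})+\mathrm{Tr}(bx^2)$. The first summand vanishes for every $x\in\mathbb{F}_{q^2}$, because $ax^{q+1}$ lies in $\mathbb{F}_q$ and the trace of any element of $\mathbb{F}_q$ equals twice that element, hence zero. So the condition reduces to whether $x\mapsto\mathrm{Tr}(bx^2)$ is identically zero on $\mathbb{F}_{q^2}$; using that squaring is a bijection of $\mathbb{F}_{q^2}$ (since $q^2-1$ is odd) together with non-degeneracy of the trace form, this happens precisely when $b=0$. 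Proposition \ref{zero} then yields $\mathcal{S}(L)=0$ iff $b\neq 0$, and combined with the first case this gives the theorem.

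I do not anticipate a substantive obstacle; the points requiring care are the correct identification of $L'$ on $\mathbb{F}_{q^2}$ (where $x^{q^{-1}}=x^q$) and the observation that the potentially hardest cross-term $\mathrm{Tr}(ax^{q+1})$ drops out automatically because $x^{q+1}$ is the norm and hence $\mathbb{F}_q$-valued.
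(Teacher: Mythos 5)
Your proposal is correct and follows essentially the same route as the paper: compute $(L'+L)(x)=(a^q+a)x^q$, split into the two cases for the kernel, and apply Proposition \ref{zero}, with the cross-term $\mathrm{Tr}(ax^{q+1})$ vanishing for the same reason (the paper phrases it as $\mathrm{Tr}(a)x^{q+1}=0$, you phrase it as $ax^{q+1}\in\mathbb F_q$ having zero trace, which is the same fact). Your added detail that $\mathrm{Tr}(bx^2)\equiv 0$ iff $b=0$ via bijectivity of squaring and non-degeneracy of the trace form is a correct filling-in of a step the paper leaves implicit.
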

\begin{proof}
Note that $(L^\prime+L)(x)=(a^q+a)x^q$. If $a^q+a\ne0$, then $\ker(L^\prime+L)=\{0\}$. Otherwise, $\ker(L^\prime+L)=\mathbb F_{q^n}$ and
\[\mathrm{Tr}(ax^{q+1}+bx^2)=\mathrm{Tr}(a)x^{q+1}+\mathrm{Tr}(bx^2)=\mathrm{Tr}(bx^2).\]
The result follows immediately from Proposition \ref{zero}.
\end{proof}

Furthermore, $\mathrm{Tr}(ax^{q+1}+bx^2)$ vanishes on $\mathbb F_{q^2}$ if and only if $a^q+a=b=0$. Indeed, if it is the zero quadratic form of $\mathbb F_{q^2}/\mathbb F_q$, then $\mathcal S(L)=q^2$ and $\ker(L^\prime+L)=\mathbb F_{q^2}$ by Proposition \ref{zero}, which implies $a^q+a=0$.

Next, consider a binomial $L(x)=ax^{q^k}+bx$ with a positive integer $k$ for general $n$. Without loss of generality, assume $\gcd(k,n)=1$ (replace $q^{\gcd(k,n)}$ by $q$ if $\gcd(k,n)\ne1$).

\begin{theorem}\label{binomial}
Let $k$ be an integer satisfying $0<2k<n$ and $\gcd(k,n)=1$, and $L(x)=ax^{q^k}+bx$ with $a,b\in\mathbb F_{q^n}$. Then $\mathcal S(L)=0$ if and only if
\begin{itemize}
\item $a=0$ and $b\ne0$, or
\item $n$ is odd, $a\ne0$ and $\mathrm{Tr}\big(ba^{-(q^{kn}+1)/(q^k+1)}\big)\ne1$, or
\item $n$ is even, $a\ne0$ and $\sum_{i=0}^{n/2-1}b^{q^{2ki}}a^{-2(q^{2ki}-1)/(q^k+1)}\ne0$.
\end{itemize}
\end{theorem}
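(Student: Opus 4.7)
The plan is to invoke Proposition~\ref{zero}, reducing $\mathcal{S}(L)=0$ to the assertion that $\mathrm{Tr}(xL(x))$ is a nontrivial additive homomorphism on $\ker(L^\prime+L)$. From $L^\prime(x)=(ax)^{q^{-k}}+bx$ one has $(L^\prime+L)(x)=a^{q^{-k}}x^{q^{-k}}+ax^{q^k}$, the linear parts cancelling in characteristic~$2$. Raising to the $q^k$-th power, $\ker(L^\prime+L)$ is exactly the set of $x$ with $a^{q^k}x^{q^{2k}}=ax$, i.e., nonzero $x$ satisfying $x^{q^{2k}-1}=a^{1-q^k}$. The case $a=0$ is immediate: $L(x)=bx$ gives $\mathcal{S}(L)=\sum_v\chi(bv^2)$, which equals $q^n$ when $b=0$ and $0$ when $b\neq 0$, via the bijection $v\mapsto bv^2$.

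For $a\neq 0$ I would analyse the kernel using Hilbert~90 on the cyclic extension generated by $\sigma^{2k}$, where $\sigma$ denotes the $q$-Frobenius. Since $\gcd(k,n)=1$, this generator has order $n$ and fixed field $\mathbb{F}_q$ when $n$ is odd, and order $n/2$ and fixed field $\mathbb{F}_{q^2}$ when $n$ is even. The relevant norm of $a^{1-q^k}$ collapses to $\mathrm{N}_{\mathbb{F}_{q^n}/\mathbb{F}_q}(a)^{1-q^k}=1$ in the odd case (since $\mathrm{N}(a)\in\mathbb{F}_q$), so the kernel is the $\mathbb{F}_q$-line $\mathbb{F}_qx_0$ for some $x_0$; in the even case the norm evaluates to $a^{-(q^{kn}-1)/(q^k+1)}$, and the kernel is either trivial or an $\mathbb{F}_{q^2}$-line $\mathbb{F}_{q^2}x_0$. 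For any such $x$ in the kernel, $(ax^{q^k+1})^{q^k-1}=a^{q^k-1}x^{q^{2k}-1}=1$, so $ax^{q^k+1}\in\mathbb{F}_{q^k}\cap\mathbb{F}_{q^n}=\mathbb{F}_q$. Consequently $\mathrm{Tr}(ax^{q^k+1})=n\cdot ax^{q^k+1}$, which in characteristic~$2$ equals $ax^{q^k+1}$ for $n$ odd and $0$ for $n$ even, so $\mathrm{Tr}(xL(x))$ restricted to $\ker(L^\prime+L)$ reduces to $ax^{q^k+1}+\mathrm{Tr}(bx^2)$ or $\mathrm{Tr}(bx^2)$, respectively.

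In the odd case I would pick $x_0$ satisfying $ax_0^{q^k+1}=1$, which is possible because $\gcd(q^k+1,q^n-1)=1$ in characteristic~$2$ with $n$ odd: any common divisor must divide $q^{\gcd(2k,n)}-1=q-1$, which is odd, yet must also divide $2$ (since modulo such a divisor $q^k\equiv-1$ while $q\equiv 1$). The trace form on $\mathbb{F}_qx_0$ becomes $c\mapsto c^2(1+\mathrm{Tr}(bx_0^2))$, so it is nontrivial iff $\mathrm{Tr}(bx_0^2)\neq 1$. Identifying $x_0^2=x_0^{q^{kn}+1}=(x_0^{q^k+1})^{(q^{kn}+1)/(q^k+1)}=a^{-(q^{kn}+1)/(q^k+1)}$, using that $(q^{kn}+1)/(q^k+1)$ is a positive integer when $n$ is odd, turns the criterion into $\mathrm{Tr}(ba^{-(q^{kn}+1)/(q^k+1)})\neq 1$.

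In the even case, for $x=\alpha x_0$ with $\alpha\in\mathbb{F}_{q^2}$, the tower-of-traces formula $\mathrm{Tr}(b\alpha^2x_0^2)=\mathrm{Tr}_{\mathbb{F}_{q^2}/\mathbb{F}_q}(\alpha^2\mathrm{Tr}_{\mathbb{F}_{q^n}/\mathbb{F}_{q^2}}(bx_0^2))$ shows that the form vanishes for every $\alpha$ precisely when the inner trace $\mathrm{Tr}_{\mathbb{F}_{q^n}/\mathbb{F}_{q^2}}(bx_0^2)$ is zero. Iterating $x_0^{q^{2k}}=a^{1-q^k}x_0$ yields $x_0^{q^{2ki}}=a^{-(q^{2ki}-1)/(q^k+1)}x_0$, so the inner trace equals $x_0^2\sum_{i=0}^{n/2-1}b^{q^{2ki}}a^{-2(q^{2ki}-1)/(q^k+1)}$, which is exactly the sum in the theorem. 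The main obstacle I anticipate is the exponent arithmetic: verifying integrality of $(q^{2ki}-1)/(q^k+1)$ by splitting on the parity of $i$, converting multiplicative relations in $\mathbb{F}_{q^n}^*$ into honest integer exponents, and cleanly packaging the trivial-kernel subcases so that the sum condition aligns with the vanishing of $\mathcal{S}(L)$.
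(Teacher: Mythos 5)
Your strategy coincides with the paper's: reduce to Proposition \ref{zero}, identify $\ker(L'+L)$ with the solution set of $x^{q^{2k}-1}=a^{1-q^k}$, exhibit a generator $x_0$ (an $\mathbb F_q$-line for odd $n$, an $\mathbb F_{q^2}$-line for even $n$), and evaluate the quadratic form there. Your intermediate steps are correct and in places cleaner than the paper's: noting that $ax^{q^k+1}\in\mathbb F_q$ on the kernel, so that $\mathrm{Tr}\big(ax^{q^k+1}\big)=n\cdot ax^{q^k+1}$, kills the $x^{q^k+1}$ term at once when $n$ is even, where the paper instead reduces $\mathrm{Tr}(\alpha xL(\alpha x))$ to a quadratic form of $\mathbb F_{q^2}/\mathbb F_q$ and invokes its $n=2$ analysis; your choice $ax_0^{q^k+1}=1$ in the odd case matches the paper's explicit $\alpha=a^{-\frac{q^n}{2}(q^{kn}+1)/(q^k+1)}$.

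The one place your plan does not close is exactly the spot you flag at the end: the trivial-kernel subcase for even $n$. When $a^{(q^n-1)/(q+1)}\ne1$ (equivalently, when your norm $a^{-(q^{kn}-1)/(q^k+1)}\ne1$), Proposition \ref{zero} gives $\mathcal S(L)^2=q^n$, so $\mathcal S(L)\ne0$ unconditionally; for the stated equivalence to survive, the displayed sum would then have to vanish for every such $a$ and every $b$. It does not: as a function of $b$ it is a $q^2$-linearized polynomial whose coefficients $a^{-2(q^{2ki}-1)/(q^k+1)}$ are all nonzero, hence it is not identically zero. Concretely, for $q=2$, $n=4$, $k=1$, $a$ a primitive element of $\mathbb F_{16}$ and $b=a^2$, the sum equals $b+b^4a^{-2}=a^2+a^6\ne0$, while a direct count gives $\mathcal S(L)=-4$. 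So this subcase cannot be ``cleanly packaged'' into the third bullet as written; the even-$n$ criterion needs the additional hypothesis $a^{(q^n-1)/(q+1)}=1$. This is not a defect peculiar to your write-up --- the paper's own proof silently restricts to that subcase with ``suppose that this is the case'' and never returns to the complementary one --- but since you cannot derive the stated equivalence in the excluded subcase, your proof of the theorem as literally stated cannot be completed, and you should instead record the corrected condition.
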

\begin{proof}
The case $a=0$ is trivial, so we assume $a$ is nonzero. An element $\alpha$ of $\mathbb F_{q^n}^*$ is a root of $L^\prime+L$ if and only if
\[a\alpha+a^{q^k}\alpha^{q^{2k}}=0;\]
that is,
\[\alpha^{q^{2k}-1}=a^{1-q^k}.\]
Suppose first that $n$ is odd. Then $\gcd(q^{2k}-1,q^n-1)=q-1$ and $q^{kn}+1$ is divisible by $q^k+1$. Let $\alpha=a^{-\frac{q^n}2(q^{kn}+1)/(q^k+1)}$, so that
\[\alpha^{q^k+1}=a^{-\frac{q^n}2(q^{kn}+1)}=a^{-\frac{q^n}2\cdot2}=a^{-1}.\]
Hence, $(L^\prime+L)(\alpha)=0$ and $\ker(L^\prime+L)=\alpha\mathbb F_q$, while
\[\mathrm{Tr}\big(a\alpha^{q^k+1}x^{q^k+1}+b\alpha^2x^2\big)\equiv\mathrm{Tr}\big(1+ba^{-(q^{kn}+1)/(q^k+1)}\big)x^2\pmod{x^q-x}.\]
Therefore, $\mathrm{Tr}(xL(x))$ vanishes on $\ker(L^\prime+L)$ if and only if $\mathrm{Tr}\big(ba^{-(q^{kn}+1)/(q^k+1)}\big)=1$.

Consider the case of even $n$. Now $\gcd(q^{2k}-1,q^n-1)=q^2-1$ while $\gcd(q^k-1,q^n-1)=q-1$, and there exists $\alpha\in\mathbb F_{q^n}^*$ such that $(L^\prime+L)(\alpha)=0$ if and only if $a^{(1-q^k)(q^n-1)/(q^2-1)}=1$; i.e., $a^{(q^n-1)/(q+1)}=1$. Suppose that this is the case. Since $\gcd(q^k+1,q^n-1)=q+1$, one can choose $\alpha\in\mathbb F_{q^n}$ with $\alpha^{q^k+1}=a^{-1}$, so that $\ker(L^\prime+L)=\alpha\mathbb F_{q^2}$. Let $\mathrm{Tr}_2$ denote the trace map of $\mathbb F_{q^n}/\mathbb F_{q^2}$. Then
\[\mathrm{Tr}_2\big(a\alpha^{q^k+1}x^{q^k+1}+b\alpha^2x^2\big)\equiv\mathrm{Tr}_2\big(a\alpha^{q^k+1}\big)x^{q+1}+\mathrm{Tr}_2(b\alpha^2)x^2\pmod{x^{q^2}-x}.\]
Taking its trace from $\mathbb F_{q^2}$ to $\mathbb F_q$, we get $\mathrm{Tr}(\alpha xL(\alpha x))$ modulo $x^{q^2}-x$, regarded as a quadratic form of $\mathbb F_{q^2}/\mathbb F_q$. We have just seen that $\mathrm{Tr}(\alpha xL(\alpha x))$ vanishes on $\mathbb F_{q^2}$ if and only if $\mathrm{Tr}_2\big(a\alpha^{q^k+1}\big)^q+\mathrm{Tr}_2\big(a\alpha^{q^k+1}\big)=\mathrm{Tr}_2(b\alpha^2)=0$. In fact, $a\alpha^{q^k+1}=1$ and
\[\mathrm{Tr}_2(b\alpha^2)=\sum_{i=0}^{n/2-1}(b\alpha^2)^{q^{2ki}}=\alpha^2\sum_{i=0}^{n/2-1}b^{q^{2ki}}\alpha^{2(q^{2ki}-1)}=\alpha^2\sum_{i=0}^{n/2-1}b^{q^{2ki}}a^{-2(q^{2ki}-1)/(q^k+1)},\]
since $2k$ has additive order $\frac n2$ modulo $n$. These, combined with Proposition \ref{zero}, yield the results.
\end{proof}

\section{Instances of permutation polynomials}

Consider the polynomial
\[f(x)=\sum_{i=0}^{n-1}L_i\big(x^{q^i+1}\big),\]
where each $L_i$ is a $2$-linear polynomial over $\mathbb F_{q^n}$. Then, for $u\in\mathbb F_{q^n}$,
\[\sum_{v\in\mathbb F_{q^n}}\chi(uf(v))=\sum_{v\in\mathbb F_{q^n}}\chi\Bigg(u\sum_{i=0}^{n-1}L_i\big(v^{q^i+1}\big)\Bigg)=\sum_{v\in\mathbb F_{q^n}}\chi\Bigg(\sum_{i=0}^{n-1}L_i^\prime(u)v^{q^i+1}\Bigg).\]
In particular,
\[\mathrm{Tr}\Bigg(\sum_{i=0}^{n-1}L_i^\prime(u)x^{q^i+1}\Bigg)\]
induces a quadratic form of $\mathbb F_{q^n}/\mathbb F_q$. For $\ell_u(x)=\sum_{i=0}^{n-1}L_i^\prime(u)x^{q^i}$, it follows from Theorem \ref{permutation} that $f$ is a permutation polynomial of $\mathbb F_{q^n}$ if and only if $\mathcal S(\ell_u)=0$ for every $u\in\mathbb F_{q^n}^*$. In general, the condition for the character sum to be zero is presented in Proposition \ref{zero}.

\begin{theorem}
Let $n=2$, $L_0$ and $L_1$ be $2$-linear polynomials over $\mathbb F_{q^n}$. Then $L_1(x^{q+1})+L_0(x^2)$ is a permutation polynomial of $\mathbb F_{q^2}$ if and only if $L_1(x^q+x)=0$ and $\ker L_0=\{0\}$.
\end{theorem}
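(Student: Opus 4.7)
The plan is to invoke the framework set up immediately before the theorem. Writing $f(x)=L_1(x^{q+1})+L_0(x^2)$ in the form $\sum_{i=0}^{1}L_i(x^{q^i+1})$ with $n=2$, that criterion tells us $f$ is a permutation polynomial of $\mathbb{F}_{q^2}$ if and only if $\mathcal{S}(\ell_u)=0$ for every $u\in\mathbb{F}_{q^2}^*$, where
\[\ell_u(x)=L_0'(u)x+L_1'(u)x^q.\]

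I will then apply Theorem \ref{degree2} with $a=L_1'(u)$ and $b=L_0'(u)$ to conclude that $\mathcal{S}(\ell_u)=0$ iff $L_1'(u)^q+L_1'(u)=0$ and $L_0'(u)\neq 0$. Hence $f$ is a permutation polynomial precisely when both conditions hold for every nonzero $u$ (the first is automatic at $u=0$, so it actually holds for all $u\in\mathbb{F}_{q^2}$).

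The remaining task is to translate these two conditions on $L_0'$ and $L_1'$ into the conditions on $L_0$ and $L_1$ stated in the theorem. For $L_0$, requiring $L_0'(u)\neq 0$ for all $u\neq 0$ says $\ker L_0'=\{0\}$; by the trace duality $\ker L_0'=(\im L_0)^\perp$ and finiteness of $\mathbb{F}_{q^2}$, this is equivalent to $\ker L_0=\{0\}$. For $L_1$, requiring $L_1'(u)^q+L_1'(u)=0$ for every $u$ says $\im L_1'\subseteq\mathbb{F}_q$, the fixed field of Frobenius. Using $\ker L_1=(\im L_1')^\perp$ together with the identification $\mathbb{F}_q^\perp=\mathbb{F}_q$ inside $\mathbb{F}_{q^2}$ (valid in characteristic $2$ since $\mathrm{Tr}(a)=a+a^q=2a=0$ for $a\in\mathbb{F}_q$, so $\mathbb{F}_q\subseteq\ker\mathrm{Tr}$, with equality forced by a dimension count), this becomes $\mathbb{F}_q\subseteq\ker L_1$. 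Finally, the image of $x\mapsto x^q+x$ on $\mathbb{F}_{q^2}$ is exactly $\mathbb{F}_q$ (its kernel is $\mathbb{F}_q$, and dimension counting pins down the image), so $\mathbb{F}_q\subseteq\ker L_1$ is precisely the assertion $L_1(x^q+x)=0$.

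The only real subtlety I anticipate is the chain of dualities in the last step, in particular the characteristic-$2$ identity $\mathbb{F}_q^\perp=\mathbb{F}_q$ in $\mathbb{F}_{q^2}$; once that is in hand, both directions of the equivalence follow at once from the reductions above, and no further computation is required.
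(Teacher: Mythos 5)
Your proposal is correct and follows essentially the same route as the paper: reduce to $\mathcal S(\ell_u)=0$ for all $u\in\mathbb F_{q^2}^*$ via the framework preceding the theorem, apply Theorem \ref{degree2} with $a=L_1^\prime(u)$ and $b=L_0^\prime(u)$, and translate the resulting conditions on $L_0^\prime,L_1^\prime$ back to $L_0,L_1$ by trace duality. The only cosmetic difference is that the paper handles the $L_1$ condition by the direct adjointness computation $\mathrm{Tr}(uL_1(x^q+x))=\mathrm{Tr}\big((L_1^\prime(u)^q+L_1^\prime(u))x\big)$ rather than through the identity $\mathbb F_q^\perp=\mathbb F_q$ in $\mathbb F_{q^2}$, which you justify correctly.
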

\begin{proof}
As a result of Theorem \ref{degree2}, $L_1(x^{q+1})+L_0(x^2)$ is a permutation polynomial of $\mathbb F_{q^2}$ if and only if $L_1^\prime(u)^q+L_1^\prime(u)=0$ and $L_0^\prime(u)\ne0$ for every $u\in\mathbb F_{q^2}^*$. Note that
\[\mathrm{Tr}(uL_1(x^q+x))=\mathrm{Tr}(L_1^\prime(u)(x^q+x))=\mathrm{Tr}((L_1^\prime(u)^q+L_1^\prime(u))x)\]
for every $u\in\mathbb F_{q^2}$. Accordingly, $\im L_1^\prime\subseteq\mathbb F_q$ if and only if $L_1(x^q+x)=0$. Furthermore, $\ker L_0^\prime=\{0\}$ if and only if $\ker L_0=\{0\}$ as $\dim_{\mathbb F_q}\ker L_0+\dim_{\mathbb F_q}\im L_0^\prime=n$.
\end{proof}

Now, consider the binomial $ax^{q^k}+bx$ from Theorem \ref{binomial}.

\begin{example}
Given a $2$-linear polynomial $L_0$ over $\mathbb F_{q^n}$, the condition for $\mathrm{Tr}(x^{q+1})+L_0(x^2)$ to be a permutation polynomial is immediately derived from the above results with Theorem \ref{binomial}, although it has been characterized in \cite[Theorem 2, Theorem 14]{chen2025permutation}. Such permutation polynomials are discussed explicitly there.
\end{example}

\begin{example}
Let $n=3$, $L_0(x)=ax^{2^{-1}}$ and $L_1(x)=x^{q^2}+x$. Since $\ker L_0^\prime=\{0\}$, the polynomial $L_0(x^2)+L_1(x^{q+1})$ is a permutation polynomial of $\mathbb F_{q^3}$ if and only if $\mathrm{Tr}\big((au)^2(u^q+u)^{-(q^3+1)/(q+1)}\big)\ne1$ for every $u\in\mathbb F_{q^n}$ such that $u^q+u\ne0$. If $a\in\mathbb F_q^*$ and $u^q+u\ne0$, then
\[\begin{split}&\mathrel{\phantom{=}}\mathrm{Tr}\big((au)^2(u^q+u)^{-(q^3+1)/(q+1)}\big)\\&=\mathrm{Tr}\big(a^2u^2(u^q+u)^{-q^2+q-1}\big)\\&=\mathrm{Tr}(a^2u^2(u^q+u)^{2q})\mathrm N(u^q+u)^{-1}\\&=\mathrm{Tr}\big(a^2u^{2q^2+2}+a^2u^{2q+2}\big)\mathrm N(u^q+u)^{-1}\\&=0.\end{split}\]
This shows that $x^{q^2+1}+x^{q+1}+ax$ is a permutation polynomial of $\mathbb F_{q^3}$ for $a\in\mathbb F_q^*$, as in \cite[Theorem 4]{tu2014several}, from another perspective.
\end{example}

\begin{theorem}
Let $n$ be odd, $k$ be an integer satisfying $0<2k<n$ and $\gcd(k,n)=1$, and $L_0$ be a $2$-linear polynomial over $\mathbb F_{q^n}$. Then $x^{q^k+1}+L_0(x^2)$ is a permutation polynomial of $\mathbb F_{q^n}$ if and only if $\mathrm{Tr}\big(L_0^\prime\big(u^{q^k+1}\big)u^{-2}\big)\ne1$ for every $u\in\mathbb F_{q^n}^*$.
\end{theorem}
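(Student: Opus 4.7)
The plan is to read off this statement as a direct application of the framework at the start of Section 3 combined with Theorem \ref{binomial}. Write $f(x) = x^{q^k+1} + L_0(x^2)$ in the form $\sum_{i=0}^{n-1} L_i(x^{q^i+1})$ by taking $L_k(x) = x$ together with the given $L_0$ in slot $i=0$, all other $L_i$ being zero. Since $L_k' = \mathrm{id}$, the criterion recorded just before this theorem says that $f$ is a permutation polynomial of $\mathbb F_{q^n}$ if and only if $\mathcal S(\ell_u) = 0$ for every $u \in \mathbb F_{q^n}^*$, where
\[\ell_u(x) = u x^{q^k} + L_0'(u)\, x.\]

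For each nonzero $u$, this $\ell_u$ is precisely a binomial of the shape handled by Theorem \ref{binomial} with $a = u \ne 0$ and $b = L_0'(u)$. Since $n$ is odd, the odd-$n$ clause of that theorem yields the equivalent condition
\[\mathrm{Tr}\bigl(L_0'(u)\, u^{-(q^{kn}+1)/(q^k+1)}\bigr) \ne 1 \quad\text{for every } u \in \mathbb F_{q^n}^*.\]
The remaining task is to recast this in the form $\mathrm{Tr}\bigl(L_0'(v^{q^k+1})\, v^{-2}\bigr) \ne 1$ via the substitution $u = v^{q^k+1}$.

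To justify that substitution I would check that $v \mapsto v^{q^k+1}$ is a bijection of $\mathbb F_{q^n}^*$, i.e.\ that $\gcd(q^k+1, q^n-1) = 1$ under our hypotheses. Any common divisor $d$ divides $q^{2k}-1$ and $q^n-1$, hence divides $q^{\gcd(2k,n)}-1 = q-1$ (since $n$ odd and $\gcd(k,n)=1$ give $\gcd(2k,n)=1$); then $q \equiv 1 \pmod d$ together with $d \mid q^k+1$ forces $d \mid 2$, and since $q$ is even and $q^k+1$ is odd, $d = 1$. Once the substitution is legitimate, the exponent simplifies using $v^{q^{kn}} = v$: namely
\[u^{-(q^{kn}+1)/(q^k+1)} = v^{-(q^k+1)\cdot(q^{kn}+1)/(q^k+1)} = v^{-(q^{kn}+1)} = v^{-2},\]
which converts the displayed condition into the one in the theorem statement. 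The whole argument is a chain of applications of previously established results; the only places demanding a little care are the coprimality check above and the clean cancellation in the exponent, neither of which is a real obstacle.
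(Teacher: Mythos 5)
Your proposal is correct and follows essentially the same route as the paper: apply the Section 3 framework to get $\ell_u(x)=ux^{q^k}+L_0'(u)x$, invoke the odd-$n$ case of Theorem \ref{binomial} with $a=u$, $b=L_0'(u)$, and then substitute $u\mapsto u^{q^k+1}$, which is bijective since $\gcd(q^k+1,q^n-1)=1$. The only difference is that you spell out the coprimality check that the paper merely asserts.
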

\begin{proof}
By Theorem \ref{binomial}, $x^{q^k+1}+L_0(x^2)$ is a permutation polynomial of $\mathbb F_{q^n}$ if and only if
\[\mathrm{Tr}\big(L_0^\prime(u)u^{-(q^{kn}+1)/(q^k+1)}\big)\ne1\]
for every $u\in\mathbb F_{q^n}^*$, where substituting $u^{q^k+1}$ for $u$ on the left side gives
\[\mathrm{Tr}\big(L_0^\prime\big(u^{q^k+1}\big)u^{-q^{kn}-1}\big)=\mathrm{Tr}\big(L_0^\prime\big(u^{q^k+1}\big)u^{-2}\big).\]
The result follows since $\gcd(q^k+1,q^n-1)=1$.
\end{proof}

\begin{example}
Let $n=3$ and consider $x^{q+1}+ax^{2q}+bx^2$ for $a,b\in\mathbb F_{q^3}$. It is a permutation polynomial of $\mathbb F_{q^3}$ if and only if $\mathrm{Tr}(L_0^\prime(u^{q+1})u^{-2})\ne1$ for every $u\in\mathbb F_{q^3}^*$, where $L_0(x)=ax^q+bx$ with
\[\mathrm{Tr}(L_0^\prime(u^{q+1})u^{-2})=\mathrm{Tr}\big(a^{q^2}u^{q^2-1}+bu^{q-1}\big)=\mathrm{Tr}(au^{1-q}+bu^{q-1}).\]
When $a\ne0$, there exists $u\in\mathbb F_{q^3}^*$ with $\mathrm{Tr}(au^{1-q})=1$ by \cite[Theorem 5.3]{moisio2008kloosterman}. When $ab\ne0$, $\mathrm{Tr}(au^{1-q}+bu^{q-1})\ne1$ for every $u\in\mathbb F_{q^3}^*$ if and only if $\mathrm N(a)+\mathrm N(b)=ab$ by \cite[Lemma 8]{chen2024characterizations}. Combining these results we conclude that $x^{q+1}+ax^{2q}+bx^2$ is a permutation polynomial of $\mathbb F_{q^3}$ if and only if $\mathrm N(a)+\mathrm N(b)=ab$.
\end{example}

\begin{example}
Let $n=2k+1$ and $L_0(x)=ax^{q^{n-1}}$ for $a\in\mathbb F_{q^n}$, so that
\[\mathrm{Tr}\big(L_0^\prime\big(u^{q^k+1}\big)u^{-2}\big)=\mathrm{Tr}\big(a^qu^{q^{k+1}+q-2}\big)\]
for $u\in\mathbb F_{q^n}$. Consider the case $q=4$ with $a^{2^n-1}\in\mathbb F_4\setminus\mathbb F_2$, where $a\in\alpha\mathbb F_{2^n}$ for some $\alpha\in\mathbb F_4\setminus\mathbb F_2$ as $\alpha^{2^n-1}=\alpha^{2-1}=\alpha$. Now $\mathrm{Tr}(u^{2^n+1})\in\mathbb F_4\cap\mathbb F_{2^n}=\mathbb F_2$ and
\[\mathrm{Tr}\big(\alpha u^{2^n+1}\big)=\alpha\mathrm{Tr}\big(u^{2^n+1}\big)\ne1\]
for every $u\in\mathbb F_{4^n}$, so
\[\mathrm{Tr}\big(L_0^\prime\big(u^{q^k+1}\big)u^{-2}\big)=\mathrm{Tr}\big(a^4u^{2^{2k+2}+2}\big)=\mathrm{Tr}\big(a^2u^{2^n+1}\big)^2\ne1.\]
Therefore, the polynomial $x^{2^n+2}+ax$, as well as $x^{q^k+1}+ax^{2q^{n-1}}$, is a permutation polynomial of $\mathbb F_{4^n}$.
\end{example}

We turn to another form of polynomials involving the trace map. For a basis $\beta_1,\dots,\beta_n$ of $\mathbb F_{q^n}/\mathbb F_q$, the map from $\mathbb F_{q^n}$ to $\mathbb F_q^n$ defined by
\[v\mapsto(\mathrm{Tr}(\beta_1v),\dots,\mathrm{Tr}(\beta_nv))\qquad(v\in\mathbb F_{q^n})\]
is bijective. This leads to the following results.

\begin{theorem}\label{tr}
Let $L_0$ and $L_1$ be $q$-linear polynomials over $\mathbb F_{q^n}$ with a nonnegative integer $l$. Then $L_0\big(x^{2^l}\big)+L_1(x)\mathrm{Tr}(x)$ is a permutation polynomial of $\mathbb F_{q^n}$ if and only if, for every $u\in\mathbb F_{q^n}^*$,
\begin{itemize}
\item $L_1^\prime(u)\in\mathbb F_q$ and $L_0^\prime(u)^2+L_1^\prime(u)^{2^l}\ne0$, or
\item $1,L_0^\prime(u),L_1^\prime(u)^{2^l}$ are linearly independent over $\mathbb F_q$.
\end{itemize}
\end{theorem}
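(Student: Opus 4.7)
The plan is to invoke Theorem \ref{permutation}: the polynomial $f(x) = L_0(x^{2^l}) + L_1(x)\mathrm{Tr}(x)$ permutes $\mathbb F_{q^n}$ if and only if the character sum
\[
S(u) = \sum_{v \in \mathbb F_{q^n}} \chi(u f(v))
\]
vanishes for every $u \in \mathbb F_{q^n}^*$. To simplify $S(u)$, I would use that $\chi(y^{2^l}) = \chi(y)$ in characteristic $2$ to get $\chi(u L_0(v^{2^l})) = \chi(L_0'(u) v^{2^l}) = \chi(L_0'(u)^{2^{-l}} v)$, while $\mathrm{Tr}(v) \in \mathbb F_q$ may be pulled out of $\chi$ so that $\chi(u L_1(v)\mathrm{Tr}(v)) = \chi(\mathrm{Tr}(v)\, L_1'(u) v)$. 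Setting $a = L_0'(u)^{2^{-l}}$ and $b = L_1'(u)$, additivity of $\chi$ yields
\[
S(u) = \sum_{v \in \mathbb F_{q^n}} \chi\bigl((a + \mathrm{Tr}(v) b)\, v\bigr).
\]

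Next, I would stratify by $t = \mathrm{Tr}(v) \in \mathbb F_q$. A standard orthogonality computation, which hinges on the fact that the $\chi$-annihilator of $\ker \mathrm{Tr}$ inside $\mathbb F_{q^n}$ is precisely $\mathbb F_q$, gives $\sum_{\mathrm{Tr}(v) = t} \chi(cv) = q^{n-1} \psi(c t)$ when $c \in \mathbb F_q$ and $0$ otherwise. Applied with $c = a + tb$ this produces
\[
S(u) = q^{n-1} \sum_{\substack{t \in \mathbb F_q \\ a + tb \in \mathbb F_q}} \psi(a t + b t^2).
\]

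The core step is then a case analysis on whether $b \in \mathbb F_q$. If $b \notin \mathbb F_q$, the constraint $a + tb \in \mathbb F_q$ admits at most one solution $t$, and admits exactly one iff $a$ lies in the $\mathbb F_q$-span of $1$ and $b$; in that subcase $S(u)$ reduces to a single nonzero character value, so $S(u) = 0$ iff $1, a, b$ are $\mathbb F_q$-linearly independent. If $b \in \mathbb F_q$, the constraint holds either for every $t$ (when $a \in \mathbb F_q$) or for none, and in the former subcase the Frobenius invariance $\psi(b t^2) = \psi(b^{1/2} t)$ collapses the sum to $\sum_{t \in \mathbb F_q} \psi((a + b^{1/2}) t)$, which vanishes iff $a^2 \ne b$.

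To finish, I would translate the criteria back via $a = L_0'(u)^{2^{-l}}$ and $b = L_1'(u)$: the relation $a^2 = b$ becomes $L_0'(u)^2 + L_1'(u)^{2^l} = 0$ in characteristic $2$, and the $\mathbb F_q$-linear independence of $1, a, b$ is equivalent to that of $1, L_0'(u), L_1'(u)^{2^l}$ because the $2^l$-th power is an $\mathbb F_q$-automorphism of $\mathbb F_{q^n}$. The two bullets in the theorem correspond to the mutually exclusive subcases $L_1'(u) \in \mathbb F_q$ and $L_1'(u) \notin \mathbb F_q$. The most delicate point I would verify carefully is the borderline situation $b \in \mathbb F_q$ with $a \notin \mathbb F_q$: here $S(u) = 0$ is automatic, and one has to observe that the first bullet also holds automatically because $L_0'(u)^2 + L_1'(u)^{2^l}$ is then forced outside $\mathbb F_q$.
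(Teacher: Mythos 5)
Your proof is correct, and it reaches the theorem by a route that is organized differently from the paper's. The paper keeps the character sum as a quadratic form in the ``coordinates'' $\mathrm{Tr}(v)$, $\mathrm{Tr}(L_1^\prime(u)v)$, $\mathrm{Tr}(L_0^\prime(u)v)$, splits at the outset on whether $L_0^\prime(u)$ lies in the $\mathbb F_q$-span of $1$ and $L_1^\prime(u)^{2^l}$, and evaluates the resulting two-variable sums via its Proposition \ref{sum} (the $\sum_{v_1,v_2}\psi(v_1v_2+av_1+bv_2)=\psi(ab)q$ identity) or by factoring out $\sum_{v_0}\psi(v_0)=0$. You instead collapse everything to the single linear expression $\chi\bigl((a+\mathrm{Tr}(v)b)v\bigr)$ with $a=L_0^\prime(u)^{2^{-l}}$, $b=L_1^\prime(u)$, fiber over $t=\mathrm{Tr}(v)$, and use the fact that the $\chi$-annihilator of $\ker\mathrm{Tr}$ is exactly $\mathbb F_q$ to get the closed form $S(u)=q^{n-1}\sum_{t:\,a+tb\in\mathbb F_q}\psi(at+bt^2)$, from which all cases drop out of the solvability of $a+tb\in\mathbb F_q$. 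This buys a more uniform argument that bypasses Proposition \ref{sum} entirely and makes the linear-independence condition appear naturally as ``the constrained sum is empty''; the paper's version is less streamlined here but fits its general quadratic-form framework (Propositions \ref{sum} and \ref{zero}), which it reuses for the other theorems. Your checks at the delicate points --- the uniqueness of $t$ when $b\notin\mathbb F_q$, the Frobenius collapse $\psi(bt^2)=\psi(b^{1/2}t)$ giving the criterion $a^2\ne b$, and the borderline case $b\in\mathbb F_q$, $a\notin\mathbb F_q$ where $L_0^\prime(u)^2+L_1^\prime(u)^{2^l}$ is automatically nonzero --- are all sound, as is the translation of linear independence through the automorphism $x\mapsto x^{2^l}$.
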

\begin{proof}
For $u,v\in\mathbb F_{q^n}$, we have
\[\mathrm{Tr}\big(uL_0\big(v^{2^l}\big)+uL_1(v)\mathrm{Tr}(v)\big)=\mathrm{Tr}\big(L_0^\prime(u)v^{2^l}\big)+\mathrm{Tr}(L_1^\prime(u)v)\mathrm{Tr}(v).\]
Suppose that $L_0^\prime(u)=a+bL_1^\prime(u)^{2^l}$ for some $a,b\in\mathbb F_q$. Then
\[\mathrm{Tr}\big(uL_0\big(v^{2^l}\big)+uL_1(v)\mathrm{Tr}(v)\big)=a\mathrm{Tr}(v)^{2^l}+b\mathrm{Tr}(L_1^\prime(u)v)^{2^l}+\mathrm{Tr}(L_1^\prime(u)v)\mathrm{Tr}(v).\]
If $L_1^\prime(u)\in\mathbb F_q$, then $L_0^\prime(u)\in\mathbb F_q$ and
\[\begin{split}&\mathrel{\phantom{=}}\sum_{v\in\mathbb F_{q^n}}\chi\big(uL_0\big(v^{2^l}\big)+uL_1(v)\mathrm{Tr}(v)\big)\\&=\sum_{v\in\mathbb F_{q^n}}\psi\big(L_0^\prime(u)\mathrm{Tr}(v)^{2^l}+L_1^\prime(u)\mathrm{Tr}(v)^2\big)\\&=\sum_{v\in\mathbb F_{q^n}}\psi\big(\big(L_0^\prime(u)^2+L_1^\prime(u)^{2^l}\big)\mathrm{Tr}(v)^{2^{l+1}}\big),\end{split}\]
which is zero if and only if $L_0^\prime(u)^2+L_1^\prime(u)^{2^l}\ne0$. If $L_1^\prime(u)\notin\mathbb F_q$, then
\[\begin{split}&\mathrel{\phantom{=}}\sum_{v\in\mathbb F_{q^n}}\chi\big(uL_0\big(v^{2^l}\big)+uL_1(v)\mathrm{Tr}(v)\big)\\&=\sum_{v\in\mathbb F_{q^n}}\psi\big(a^{2^{-l}}\mathrm{Tr}(v)+b^{2^{-l}}\mathrm{Tr}(L_1^\prime(u)v)+\mathrm{Tr}(L_1^\prime(u)v)\mathrm{Tr}(v)\big),\end{split}\]
which is always nonzero by Proposition \ref{sum}.

If $L_0^\prime(u)$ is not a linear combination of $1$ and $L_1^\prime(u)^{2^l}$ over $\mathbb F_q$, then
\[\begin{split}&\mathrel{\phantom{=}}\sum_{v\in\mathbb F_{q^n}}\chi\big(uL_0\big(v^{2^l}\big)+uL_1(v)\mathrm{Tr}(v)\big)\\&=\sum_{v\in\mathbb F_{q^n}}\psi\big(\mathrm{Tr}\big(L_0^\prime(u)v^{2^l}\big)+\mathrm{Tr}\big(L_1^\prime(u)^{2^l}v^{2^l}\big)\mathrm{Tr}(v)^{2^l}\big)\\&=\sum_{v\in\mathbb F_{q^n}}\psi(\mathrm{Tr}(L_0^\prime(u)v))\psi\big(\mathrm{Tr}\big(L_1^\prime(u)^{2^l}v\big)\mathrm{Tr}(v)\big),\end{split}\]
where, letting $v_0=\mathrm{Tr}(L_0^\prime(u)v)$, the sum is divisible by
\[\sum_{v_0\in\mathbb F_q}\psi(v_0)=0.\]
A simple investigation on the conditions completes the proof.
\end{proof}

\begin{example}
Let $k$ be an integer satisfying $0<k<n$ and $\gcd(k,n)=1$. We claim that, for $a\in\mathbb F_{q^n}^*$, the elements $1,u,au^{q^k}+au$ are linearly independent over $\mathbb F_q$ for every $u\in\mathbb F_{q^n}\setminus\mathbb F_q$ if and only if $\mathrm{Tr}(a^{-1})\ne0$ and $\mathrm N(a+c)\ne\mathrm N(a)$ for every $c\in\mathbb F_q^*$. As a consequence, the polynomial $(ax)^{q^{n-k}}+ax+x\mathrm{Tr}(x)$ is a permutation polynomial of $\mathbb F_{q^n}$ if and only if $a$ satisfies these conditiond. To see this, let $L_0(x)=(ax)^{q^{n-k}}+ax$ and $L_1(x)=x$ as in Theorem \ref{tr}, where $L_0^\prime(x)=ax^{q^k}+ax$ and $L_0^\prime(u)^2+L_1^\prime(u)=u\ne0$ for every $u\in\mathbb F_q^*$.

For the claim, consider the polynomial $\lambda_c(x)=ax^{q^k}+ax+cx$ for $c\in\mathbb F_q$. Assume that $1,u,au^{q^k}+au$ are linearly independent over $\mathbb F_q$ for every $u\in\mathbb F_{q^n}\setminus\mathbb F_q$, which implies $\lambda_c(u)\notin\mathbb F_q$. Then $\ker\lambda_c\subseteq\mathbb F_q$; that is, for $\alpha\in\mathbb F_{q^n}$, if $\alpha^{q^k-1}=a^{-1}(a+c)$, then $\alpha\in\mathbb F_q$. As $\gcd(q^k-1,q^n-1)=q-1$, it follows that $\mathrm N(a^{-1}(a+c))=1$ implies $a^{-1}(a+c)=1$. When $c=0$, since $\lambda_c$ vanishes on $\mathbb F_q$, we have $\im\lambda_c\cap\mathbb F_q=\{0\}$ by the assumption, where $\im\lambda_c=a\ker\mathrm{Tr}$. A straightforward argument shows that $\mathrm{Tr}(a^{-1})\ne0$. This proves the necessity.

Assume the converse. If $c=0$, then clearly $\ker\lambda_c=\mathbb F_q$ and $\im\lambda_c\cap\mathbb F_q=\{0\}$. If $c\in\mathbb F_q^*$, then $\mathrm N(a+c)\ne\mathrm N(a)$ and $\lambda_c$ permutes $\mathbb F_{q^n}$, while $\lambda_c$ maps $\mathbb F_q$ into $\mathbb F_q$. In either case, $\lambda_c(u)\notin\mathbb F_q$ for every $u\in\mathbb F_{q^n}\setminus\mathbb F_q$. The claim is then valid.
\end{example}

\begin{example}
For $a\in\mathbb F_{q^n}^*$ with an integer $k$ satisfying $0<k<n$ and $\gcd(k,n)=1$, the polynomial $ax^{q^k}+ax+x\mathrm{Tr}(x)$ is a permutation polynomial of $\mathbb F_{q^n}$ if and only if $a\in\mathbb F_q^*$ and $n$ is odd with $\gcd(n,q-1)=1$. Let $L_0(x)=ax^{q^k}+ax$ and $L_1(x)=x$. If $L_0(x)+L_1(x)\mathrm{Tr}(x)$ is a permutation polynomial of $\mathbb F_{q^n}$, then $a\in\mathbb F_q$ since the kernel of $L_0^\prime$ is contained in $\mathbb F_q$ according to Theorem \ref{tr}, where $L_0^\prime(x)=(ax)^{q^{n-k}}+ax$. Then necessarily $a\in\mathbb F_q^*$, and thereby $L_0^\prime(u)^2+L_1^\prime(u)=u\ne0$ for every $u\in\mathbb F_q^*$. Moreover, $\mathrm{Tr}(a^{-1})=\mathrm{Tr}(1)a^{-1}$ and $\mathrm N(a^{-1}(a+c))=(1+a^{-1}c)^n$ for $c\in\mathbb F_q$. The result follows from the claim in the previous example.
\end{example}

\begin{corollary}
Let $k$ and $l$ be nonnegative integers with $q=2^m$ and $a\in\mathbb F_{q^n}^*$. Then $ax^{2^lq^k}+x\mathrm{Tr}(x)$ is a permutation polynomial of $\mathbb F_{q^n}$ if and only if $n$ is odd, $\gcd(2^{l+mk}-1,(q^n-1)/(q-1))=1$ and $a\in\mathbb F_q^*$ with $a^{(q-1)/(2^d-1)}\ne1$, where $d=\gcd(l-1,m)$.
\end{corollary}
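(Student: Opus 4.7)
The plan is to apply Theorem \ref{tr} with $L_0(x) = ax^{q^k}$ and $L_1(x) = x$, so that $L_0(x^{2^l}) + L_1(x)\mathrm{Tr}(x)$ is exactly $ax^{2^{l+mk}} + x\mathrm{Tr}(x)$. One computes $L_0^\prime(u) = a^{q^{-k}} u^{q^{-k}}$ and $L_1^\prime(u) = u$, and the two cases of Theorem \ref{tr} split according to whether $u \in \mathbb F_q$: the polynomial is a permutation if and only if $(a^{q^{-k}} u)^2 + u^{2^l} \ne 0$ for every $u \in \mathbb F_q^*$, and $1, a^{q^{-k}} u^{q^{-k}}, u^{2^l}$ are $\mathbb F_q$-linearly independent for every $u \in \mathbb F_{q^n} \setminus \mathbb F_q$.

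My first step is to extract $a \in \mathbb F_q^*$ from the independence condition by plugging in $u = a^{-1}$: if $a \notin \mathbb F_q$ then $u \notin \mathbb F_q$ while $a^{q^{-k}} u^{q^{-k}} = (au)^{q^{-k}} = 1 \in \mathbb F_q$, forcing a dependence. Given $a \in \mathbb F_q^*$, the first condition reduces to $a^2 \ne u^{2^l-2}$ for all $u \in \mathbb F_q^*$; since $q - 1$ is odd, a short gcd computation shows $\{u^{2^l-2} : u \in \mathbb F_q^*\}$ is the subgroup of index $2^d - 1$, so the condition becomes $a^{(q-1)/(2^d-1)} \ne 1$ (using again that $(q-1)/(2^d-1)$ is odd to discard a factor of $2$ in the exponent).

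For the independence condition I would substitute $v = u^{q^{-k}}$ and set $N = 2^{l+mk}$, rephrasing it as: for every nontrivial $(c_1,c_2,c_3) \in \mathbb F_q^3$, every root of $c_3 y^N + c_2 a y + c_1$ in $\mathbb F_{q^n}$ lies in $\mathbb F_q$. The case $c_3 = 0$ is immediate, and the case $c_3 \ne 0, c_1 = 0$ reduces to forcing $v \in \mathbb F_q$ whenever $v^{N-1} \in \mathbb F_q^*$, which a standard gcd argument identifies with $\gcd(2^{l+mk}-1, (q^n-1)/(q-1)) = 1$.

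The main obstacle is the case $c_1 \ne 0$. Here I would exploit that $N$ is a power of $2$, so $y \mapsto y^N$ is additive: setting $w = v + v^q$, applying Frobenius to $v^N + c_2 a v + c_1 = 0$ and adding cancels $c_1$ and yields $w^N + c_2 a w = 0$. The preceding gcd condition then forces $w \in \mathbb F_q$, whence $v^{q^2} = v$ and $v \in \mathbb F_{q^2}$; if $n$ is odd, $\mathbb F_{q^2} \cap \mathbb F_{q^n} = \mathbb F_q$ completes the sufficiency. For necessity of $n$ odd, when $n$ is even any $v \in \mathbb F_{q^2} \setminus \mathbb F_q$ satisfies $v^N \in \mathbb F_{q^2}$, hence $v^N = \alpha + \beta v$ with $\alpha, \beta \in \mathbb F_q$, yielding a nontrivial linear relation that defeats the independence condition.
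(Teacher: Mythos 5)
Your proof is correct and follows essentially the same route as the paper: both apply Theorem \ref{tr} with $L_0(x)=ax^{q^k}$ and $L_1(x)=x$, reduce the independence condition to showing that the equations $c_3y^N+c_2ay+c_1=0$ have no roots outside $\mathbb F_q$ (the paper packages this as $\lambda_c(u)\notin\mathbb F_q$ for $\lambda_c(x)=(ax)^{q^{n-k}}+cx^{2^l}$), and use the same Frobenius-and-add step to eliminate the constant term, reducing to the $c_1=0$ case and the gcd condition. Your extraction of $a\in\mathbb F_q^*$ by substituting $u=a^{-1}$ and your derivation of the necessity of $n$ odd from $v\in\mathbb F_{q^2}\setminus\mathbb F_q$ are slightly cleaner local arguments than the paper's (which uses that $\lambda_c$ permutes $\mathbb F_{q^n}$, and the element $u$ with $u^q+u=1$, respectively), but the overall structure and all key computations coincide.
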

\begin{proof}
Let $L_0(x)=ax^{q^k}$ and $L_1(x)=x$. Since $L_0^\prime(x)=(ax)^{q^{n-k}}$ and $L_1^\prime(x)=x$, the polynomial $L_0\big(x^{2^l}\big)+L_1(x)\mathrm{Tr}(x)$ is a permutation polynomial of $\mathbb F_{q^n}$ if and only if
\begin{enumerate}
\item\label{c1}$(au)^{2q^{n-k}}+u^{2^l}\ne0$ for every $u\in\mathbb F_q^*$, and
\item\label{c2}$1,u^{2^l},(au)^{q^{n-k}}$ are linearly independent over $\mathbb F_q$ for every $u\in\mathbb F_{q^n}\setminus\mathbb F_q$.
\end{enumerate}

We show that the condition \ref{c2} holds if and only if $a\in\mathbb F_q^*$, $\gcd(2^{l+mk}-1,(q^n-1)/(q-1))=1$ and $n$ is odd. Let $\lambda_c(x)=(ax)^{q^{n-k}}+cx^{2^l}$ for $c\in\mathbb F_q$. Clearly, the condition can be rephrased to say that $\lambda_c(u)\notin\mathbb F_q$ for every $u\in\mathbb F_{q^n}\setminus\mathbb F_q$ and every $c\in\mathbb F_q$. Suppose that this is the case. Then $\ker\lambda_c\subseteq\mathbb F_q$. If $a\notin\mathbb F_q$, then $\lambda_c$ maps all nonzero elements of $\mathbb F_q$ to elements outside of $\mathbb F_q$, and thus $\ker\lambda_c=\ker\lambda_c\cap\mathbb F_q=\{0\}$; in this case, $\lambda_c$ maps some element of $\mathbb F_{q^n}\setminus\mathbb F_q$ into $\mathbb F_q$ as $\lambda_c$ permutes $\mathbb F_{q^n}$. The contradiction shows that $a$ belongs to $\mathbb F_q$. For an arbitrary element $\alpha\in\mathbb F_{q^n}^*$, if $\alpha^{(1-2^{l+mk})(q-1)}=1$, then $\alpha^{1-2^{l+mk}}=a^{-1}c$ for some $c\in\mathbb F_q^*$, which means
\[\lambda_c(\alpha)^{q^k}=a\alpha+c\alpha^{2^{l+mk}}=0\]
and then $\alpha\in\mathbb F_q^*$. That is, $\alpha^{(1-2^{l+mk})(q-1)}=1$ implies $\alpha^{q-1}=1$, and thus $\gcd((1-2^{l+mk})(q-1),q^n-1)$ divides $q-1$. Moreover,
\[\gcd((1-2^{l+mk})(q-1),q^n-1)=(q-1)\gcd(2^{l+mk}-1,(q^n-1)/(q-1)),\]
so $\gcd(2^{l+mk}-1,(q^n-1)/(q-1))=1$. As $a\in\mathbb F_q$, it is clear that
\[\lambda_c(x)^q+\lambda_c(x)=\lambda_c(x^q+x),\]
where the image of $x^q+x$ on $\mathbb F_{q^n}$ is $\ker\mathrm{Tr}$. If $n$ is even, then $1\in\ker\mathrm{Tr}$ and $u^q+u=1$ for some $u\in\mathbb F_{q^n}\setminus\mathbb F_q$, so that $\lambda_a(u)^q+\lambda_a(u)=\lambda_a(1)=0$. As a consequence, $n$ must be odd.

Conversely, assume $a\in\mathbb F_q^*$, $\gcd(2^{l+mk}-1,(q^n-1)/(q-1))=1$ and $n$ is odd. For every $c\in\mathbb F_q$, if $\lambda_c(\alpha)=0$ for some $\alpha\in\mathbb F_{q^n}^*$, then $\alpha^{1-2^{l+mk}}=a^{-1}c\ne0$ and $\alpha^{q-1}=1$ since $\gcd((2^{l+mk}-1)(q-1),(q^n-1))=q-1$. This implies $\ker\lambda_c\subseteq\mathbb F_q$. For odd $n$, one has
\[\ker\lambda_c\cap\ker\mathrm{Tr}\subseteq\mathbb F_q\cap\ker\mathrm{Tr}=\{0\},\]
and $\lambda_c(u)^q+\lambda_c(u)=0$ only if $u^q+u=0$, as asserted.

Finally, given $a\in\mathbb F_q$, since $(au)^{2q^{n-k}}+u^{2^l}=a^2u^2+u^{2^l}$ for $u\in\mathbb F_q^*$, it is routine to check that the condition \ref{c1} is equivalent to $a^{(q-1)/(2^d-1)}\ne1$.
\end{proof}

\begin{example}
For $a\in\mathbb F_{q^n}$ with $n$ odd, it is stated in \cite[Theorem 5]{blokhuis2001permutations} that $ax^2+x\mathrm{Tr}(x)$ is a permutation polynomial of $\mathbb F_{q^n}$ if $a\in\mathbb F_q^*$ and $a\ne1$. The preceding corollary provides a more general characterization.
\end{example}

\section{Conclusions}

We have determined the character sum $\mathcal S(L)$ for the quadratic form $\mathrm{Tr}(xL(x))$ with some $q$-linear polynomial $L$ over $\mathbb F_{q^n}$, and characterized associated permutation polynomials using Theorem \ref{permutation}. This raises new challenges in discovering more permutation polynomials, and motivates a systematic study towards the classification of quadratic forms defined by polynomials. Apart from constructions of permutation polynomials of specific forms, properties of permutation polynomials within this framework also deserve a further investigation.

\bibliographystyle{abbrv}
\bibliography{references}

\end{document}